\newtheorem{theorem}{Theorem}[section]
\newtheorem{corollary}[theorem]{Corollary}
\newtheorem{lemma}[theorem]{Lemma}
\newtheorem{conjecture}[theorem]{Conjecture}
\theoremstyle{definition}
\newtheorem{definition}[theorem]{Definition}
\newtheorem{example}[theorem]{Example}
\numberwithin{equation}{section}
\begin{document}

\title{Diophantine Analysis of a Digital Anomaly}

\author{Samer Seraj\\
Existsforall Academy\\ 
Mississauga, Ontario, Canada\\
E-mail: samer\_seraj@outlook.com
}

\date{\today}

\maketitle

\renewcommand{\thefootnote}{}

\footnote{2020 \emph{Mathematics Subject Classification}: Primary 11D61, 11J86, 11D09, 11A63.}

\footnote{\emph{Key words and phrases}: Diophantine equations, radix representation, elementary number theory.}

\renewcommand{\thefootnote}{\arabic{footnote}}
\setcounter{footnote}{0}

\begin{abstract}
The arithmetic-digital anomaly of $5\div 2 = 2.5$ has been observed several times in the past. We generalize it to an exponential Diophantine equation and inequality in the general number base, which is the object of our analysis. First, we produce a near-parametrization of all solutions using a modification of the standard parametrization of Pythagorean triples. We use this parametrized function to find all solutions where the numerator and denominator are coprime, and we construct infinite families where they are not coprime. Next, we use a variant of Baker's theorem from transcendental number theory to prove that each number base admits only finitely many solutions. Lastly, we use the $abc$ conjecture to conditionally show that only finitely many solutions have a numerator with $k$ digits, for each $k\ge 3$. A conjecture is offered for $k=2$.
\end{abstract}
\hfill 

\section{Introduction}

Observe the arithmetic-digital property of the computation, $$\frac{5}{2} = 2.5,$$
where the numerator is the fractional part and the denominator is the integer part of the common value. This curiosity raises questions about which numerator-denominator pairs satisfy the same property, and in which number bases. Formally, we are looking at the exponential Diophantine equation \eqref{eqn:main-anomaly} with its attached inequality constraint \eqref{eqn:unrefined-ineq}.

The problem has been studied previously outside the research setting. For example, a search on the website, \textit{Mathematics Stack Exchange}, yields several relevant threads \cite{Stack1, Stack2, Stack3, Stack4}. However, all of the proposed solutions there are incorrect, or incomplete, or inconclusive about the general case. It is worth noting also that the case of a coprime numerator-denominator pair in base $B=10$ was Problem 1 of Day 1 of the Second Round of the Iran National Mathematical Olympiad in 2013. However, a definitive paper does not seem to have been written. Our aim is to remedy this gap in the literature through original contributions.

\begin{definition}\label{def:digital-anomaly}
The positive integers $(x,y,B,k)$ form a \emph{digital anomaly} if
\begin{equation}\label{eqn:main-anomaly}
    \frac{x}{y} = y + \frac{x}{B^k},
\end{equation}
where $B\ge 2$, and $k$ is the number of digits of $x$ in base $B$, which is equivalent to saying
\begin{equation}\label{eqn:unrefined-ineq}
    B^{k-1} \le x < B^k.
\end{equation}
\end{definition}

\begin{example}
The original motivating example satisfies
$$\frac{5}{2} = 2 + \frac{1}{2} = 2 + \frac{5}{10^1}, \; 10^0 \le 5 < 10^1.$$
\end{example}

To gain perspective, we see that some immediate consequences of Definition~\ref{def:digital-anomaly} are, by contradiction:
\begin{itemize}
    \item $x>y$, because otherwise $x\le y$ would lead to $\frac{x}{y} \le 1 < y + \frac{x}{B^k}$.
    \item $y\nmid x$, because otherwise $y\mid x$ would lead to the left hand side of \eqref{eqn:main-anomaly} being an integer and the right hand side not being an integer.
    \item $y\ge 2$, because otherwise $y=1$ would lead to $y\mid x$, which is a possibility that we already eliminated in the second bullet point.
\end{itemize}

Along the way, our tools will include a modification of the standard parametrization of Pythagorean triples, a variant of Baker's theorem from transcendental number theory, and the $abc$ conjecture. In this way, a seemingly fleeting observation about the digits of the decimal form of a fraction leads to applying a sequence of increasingly deeper ideas from the study of Diophantine analysis.

\section{Parametrizing Digital Anomalies}

\begin{lemma}\label{lem:Pythagorean-equivalence}
The quadruple of positive integers $(x,y,B,k)$ satisfies \eqref{eqn:main-anomaly} if and only if
\begin{equation}\label{eqn:Pythagorean-anomaly}
    (x+2B^k y)^2 + (2B^{2k})^2 = (x+2B^{2k})^2.
\end{equation}
\end{lemma}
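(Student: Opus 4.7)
The equivalence is purely algebraic, so the plan is to reduce both equations to a common polynomial identity and verify that they coincide. First I would clear denominators in \eqref{eqn:main-anomaly}: multiplying both sides by $yB^k$ (which is nonzero since $y,B,k\ge 1$) yields the equivalent polynomial relation
\begin{equation*}
    xB^k \;=\; y^2 B^k + xy,
\end{equation*}
or equivalently $x(B^k - y) = B^k y^2$. This will be the ``target'' identity on the digital-anomaly side.

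Next I would attack \eqref{eqn:Pythagorean-anomaly} from the Pythagorean side by invoking the difference-of-squares factorization. Setting $a = x+2B^ky$, $b = 2B^{2k}$, and $c = x + 2B^{2k}$, equation \eqref{eqn:Pythagorean-anomaly} is $c^2 - a^2 = b^2$, i.e.\ $(c-a)(c+a) = b^2$. A direct computation gives
\begin{equation*}
    c - a \;=\; 2B^k(B^k - y), \qquad c + a \;=\; 2\bigl(x + B^{2k} + B^k y\bigr),
\end{equation*}
so \eqref{eqn:Pythagorean-anomaly} becomes $4B^k(B^k - y)(x + B^{2k} + B^k y) = 4B^{4k}$, which after dividing by $4B^k$ reads
\begin{equation*}
    (B^k - y)(x + B^{2k} + B^k y) \;=\; B^{3k}.
\end{equation*}
Expanding the left-hand side and cancelling the terms $B^{3k}$, $B^{2k}y$ and $-B^{2k}y$ should collapse this to $xB^k - xy - B^k y^2 = 0$, the very same target identity obtained from \eqref{eqn:main-anomaly}.

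Since every step above is reversible (each multiplication or division is by a nonzero quantity), the two equations are equivalent as claimed. The main ``obstacle'' is really just bookkeeping: there is no hidden idea beyond the factorization $c^2 - a^2 = (c-a)(c+a)$ together with the observation that the differences $c-a$ and $c+a$ factor cleanly through $B^k - y$ and $x + B^{2k} + B^k y$. Thus the proof will be a short chain of equivalences with no case analysis.
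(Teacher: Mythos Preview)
Your proof is correct. Both you and the paper reduce the equivalence to the common polynomial identity $xB^k = B^k y^2 + xy$, and your bookkeeping via the difference-of-squares factorization $(c-a)(c+a)=b^2$ is clean and fully reversible.

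The route differs slightly from the paper's. The paper acknowledges that a direct expansion suffices, but spends most of its proof showing how one would \emph{discover} \eqref{eqn:Pythagorean-anomaly}: it solves \eqref{eqn:main-anomaly} for $y$ via the quadratic formula, then reverses those steps as a double completion of the square (first in $y$, then in $x$) to arrive at the Pythagorean form. Your argument is purely verificatory---you take the Pythagorean identity as given and collapse it back. What the paper's approach buys is motivation for the otherwise unmotivated expression $(x+2B^k y)^2 + (2B^{2k})^2 = (x+2B^{2k})^2$; what yours buys is a shorter and tidier chain of equivalences once that expression is on the table. Either is a complete proof of the lemma as stated.
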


\begin{proof}
An expansion of \eqref{eqn:Pythagorean-anomaly} quickly shows that it is equivalent to \eqref{eqn:main-anomaly}. Instead, we display how to devise \eqref{eqn:Pythagorean-anomaly} from \eqref{eqn:main-anomaly} in the first place. The idea is to begin by using the quadratic formula to isolate $y$ in \eqref{eqn:main-anomaly} and then reverse some of the operations (which is the same as completing the square in $y$), followed by completing the square in $x$ as well:
\begin{align*}
    \frac{x}{y} = y + \frac{x}{B^k} &\iff B^k x = B^k y^2 + xy\\
    &\iff B^k y^2 + xy - B^k x = 0\\
    &\iff y = \frac{-x \pm \sqrt{x^2 + 4B^{2k}x}}{2B^k}\\
    &\iff (x+ 2B^k y)^2 = x^2 + 4B^{2k} x\\
    &\iff (x+ 2B^k y)^2 + (2B^{2k})^2 = x^2 + 4B^{2k} x + 4B^{4k}\\
    &\iff (x+ 2B^k y)^2 + (2B^{2k})^2 = (x+2B^{2k})^2.
\end{align*}
\end{proof}

Lemma~\ref{lem:Pythagorean-equivalence} motivates us to recall the classic concept of Pythagorean triples, and their well-known parametrization.

\begin{definition}
A \emph{Pythagorean triple} $(a,b,c)$ consists of three positive integers such that
\begin{equation}\label{eqn:Pythagorean-triple}
    a^2 + b^2 = c^2.
\end{equation}
A Pythagorean triple is said be to \emph{primitive} if $\gcd(a,b,c)=1$.
\end{definition}

Note that $\gcd(a,b,c) = 1$ in a Pythagorean triple $(a,b,c)$ if and only if $\gcd(a,b)=1$ because \eqref{eqn:Pythagorean-triple} implies that any common divisor of $a$ and $b$ is also a divisor of $c$. In particular, a primitive Pythagorean triple cannot allow $a$ and $b$ to both be even. Moreover, considering \eqref{eqn:Pythagorean-triple} modulo $4$, it is known that $a$ and $b$ cannot both be odd. So, $a$ and $b$ must have opposite parities in the primitive case.

\begin{lemma}\label{lem:classic-Pythag-param}
If $(a,b,c)$ is a primitive Pythagorean triple with $b$ even, then there exist uniquely determined positive integers $m$ and $n$ such that
\begin{equation}\label{eqn:classic-Pythag-param}
    (a,b,c) = (m^2 - n^2, 2mn, m^2 + n^2).
\end{equation}
Moreover, it follows that $m>n$, $m$ and $n$ are coprime, and $m$ and $n$ have opposite parities. Conversely, if $m$ and $n$ are coprime positive integers with opposite parities such that $m>n$ and \eqref{eqn:classic-Pythag-param} holds, then $(a,b,c)$ is a primitive Pythagorean triple. \cite[p.~245-246]{Hardy}.
\end{lemma}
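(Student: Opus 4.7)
The plan is to rewrite the defining equation as $b^2 = (c-a)(c+a)$ and exploit primitivity and parity to show that the two factors split as $2n^2$ and $2m^2$ with $\gcd(m,n)=1$. First I would note that since $\gcd(a,b)=1$ with $b$ even, $a$ must be odd; then from $c^2 = a^2 + b^2$ we see $c$ is odd as well. Hence $c - a$ and $c + a$ are both even, and writing $c - a = 2u$ and $c + a = 2v$, we obtain $(b/2)^2 = uv$.

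Next I would establish that $\gcd(u,v) = 1$: any common divisor of $u$ and $v$ also divides their sum $c$ and their difference $a$, and primitivity forces $\gcd(a,c)=1$. The elementary fact that a product of two coprime positive integers is a perfect square only when each factor is itself a perfect square then yields $u = n^2$ and $v = m^2$ for unique positive integers $n, m$. This delivers $a = m^2 - n^2$, $c = m^2 + n^2$, and $b = 2\sqrt{uv} = 2mn$. The positivity $a > 0$ forces $m > n$, the coprimality of $u,v$ transfers to $\gcd(m,n) = 1$, and the oddness of $a = (m-n)(m+n)$ forces $m$ and $n$ to have opposite parities.

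Uniqueness follows by inverting the formulas: $m = \sqrt{(c+a)/2}$ and $n = \sqrt{(c-a)/2}$ are determined by $(a,c)$. For the converse, I would expand the polynomial identity $(m^2-n^2)^2 + (2mn)^2 = (m^2+n^2)^2$ directly, and then verify primitivity by observing that any odd prime $p$ dividing both $m^2 - n^2$ and $m^2 + n^2$ would divide $2m^2$ and $2n^2$ and thus both $m$ and $n$, while $p = 2$ is excluded by the opposite-parity assumption making $m^2 + n^2$ odd.

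The main obstacle, modest but genuine, is the step that a coprime product equal to a square splits as a product of squares; it leans on unique factorization in $\mathbb{Z}$ and needs to be invoked cleanly. The rest is algebraic bookkeeping together with routine parity and coprimality checks, so I would be content to cite Hardy for the full derivation as the author does, while keeping the above outline in mind for the parametrization of digital anomalies that follows.
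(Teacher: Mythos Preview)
Your outline is correct and is precisely the classical argument found in Hardy and Wright, which is what the paper cites rather than proves: the lemma is stated with the reference \cite[p.~245--246]{Hardy} and no proof is given in the paper itself. Since your factor-and-split approach via $(b/2)^2 = uv$ with $\gcd(u,v)=1$ is the standard textbook derivation behind that citation, there is nothing to compare---you have simply supplied the proof the author chose to outsource.
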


\begin{definition}
The \emph{$2$-adic valuation} of a non-zero integer $n$, denoted by $\nu_2 (n)$, is the maximum integer such that $2^{\nu_2 (n)} \mid n$.
\end{definition}

By scaling up primitive Pythagorean triples by positive integer factors $\ell$, Lemma~\ref{lem:classic-Pythag-param} tells us that all Pythagorean triples (not necessarily primitive) with $\nu_2 (a) < \nu_2 (b)$ are uniquely determined as
\begin{equation}\label{eqn:general-Pythag-prelim}
(a,b,c) = (\ell (m^2 - n^2),\ell(2mn), \ell (m^2 + n^2)),
\end{equation}
where $\ell$ ranges over all positive integers, and $m,n$ necessarily satisfy the conditions stated in Lemma~\ref{lem:classic-Pythag-param}.

Although we know that \eqref{eqn:Pythagorean-anomaly} produces the Pythagorean triple
$$(x+2B^k y, 2B^{2k}, x+2B^{2k})$$
the issue is that we do not know which of $x+2B^k y$ and $2B^{2k}$ has a higher $2$-adic valuation. To avoid casework, we will tweak Lemma~\ref{lem:classic-Pythag-param} and \eqref{eqn:general-Pythag-prelim} by accommodating for the possibility that $\nu_2 (a) > \nu_2 (b)$ (assuming $2\mid b$) in a Pythagorean triple $(a,b,c)$, while retaining the structure of the expressions for $(a,b,c)$ in Lemma~\ref{lem:classic-Pythag-param}.

\begin{lemma}\label{lem:flipped-Pythag-param}
Suppose $(a,b,c)$ is a Pythagorean triple such that $\nu_2 (a) > \nu_b (b) \ge 1$. Then, there exist unique positive integers $\ell,m,n$ such that
\begin{equation}\label{eqn:general-Pythag-prelim-2}
(a,b,c) = (\ell (m^2 - n^2),\ell(2mn), \ell (m^2 + n^2)).
\end{equation}
Moreover, it follows that $m>n$, and $m$ and $n$ are both odd and coprime. Conversely, if $\ell$ is a positive integer and $m,n$ are odd, coprime, positive integers with $m>n$, then the triple $(a,b,c)$ in \eqref{eqn:general-Pythag-prelim-2} is a Pythagorean triple satisfying $\nu_2 (a) > \nu_b (b) \ge 1$.
\end{lemma}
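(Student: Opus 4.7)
The plan is to reduce the forward direction to the classical primitive parametrization in Lemma~\ref{lem:classic-Pythag-param} by a gcd-scaling step, and then convert the ``one-even, one-odd'' Pythagorean generators supplied by that lemma into the ``two-odd'' generators required here by the linear substitution $(m',n')\mapsto(m'+n',\,m'-n')$. The converse direction is nearly automatic from the identity $(m^2-n^2)^2+(2mn)^2=(m^2+n^2)^2$, but one should notice that when $m,n$ are both odd and coprime, $m^2-n^2\equiv 0\pmod{8}$ and $2mn\equiv 2\pmod{4}$, so that $\nu_2(\ell(m^2-n^2))\ge\nu_2(\ell)+3$ while $\nu_2(\ell(2mn))=\nu_2(\ell)+1$, delivering both $\nu_2(a)>\nu_2(b)$ and $\nu_2(b)\ge 1$ automatically. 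This computation also explains why the hypothesis $\nu_2(b)\ge 1$ is indispensable in the forward direction: it is exactly what ensures $d=\gcd(a,b,c)$ is even, so that $\ell=d/2$ below is a positive integer.

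For the forward direction, I first set $d=\gcd(a,b,c)$ and note that the relation $c^2=a^2+b^2$ forces $d=\gcd(a,b)$. The hypothesis $\nu_2(a)>\nu_2(b)\ge 1$ gives $\nu_2(d)=\nu_2(b)\ge 1$, so $a/d$ is even, $b/d$ is odd, and $(a/d,b/d,c/d)$ is a primitive Pythagorean triple. Applying Lemma~\ref{lem:classic-Pythag-param} with the parities interchanged relative to its statement (since here $a/d$, not $b/d$, plays the even role), I obtain unique coprime positive integers $m',n'$ of opposite parity with $m'>n'$ such that
\[
a/d=2m'n',\qquad b/d=m'^2-n'^2,\qquad c/d=m'^2+n'^2.
\]

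Next I set $\ell=d/2$, $m=m'+n'$, $n=m'-n'$. Opposite parity of $m',n'$ makes both $m,n$ odd; $n'>0$ makes $m>n$; and any common divisor of $m$ and $n$ divides both $2m'$ and $2n'$ and is odd, hence divides $\gcd(m',n')=1$, so $\gcd(m,n)=1$. A short computation then gives $m^2-n^2=4m'n'$, $2mn=2(m'^2-n'^2)$, and $m^2+n^2=2(m'^2+n'^2)$, which yields precisely $(a,b,c)=(\ell(m^2-n^2),\ell(2mn),\ell(m^2+n^2))$ as required.

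For uniqueness, I run the construction in reverse: from any admissible triple $(\ell,m,n)$ satisfying the conclusion, the converse calculation above shows $\gcd(a,b,c)=2\ell$, so $\ell$ is forced, and then $((m+n)/2,(m-n)/2)$ must coincide with the primitive parametrization of $(a/d,b/d,c/d)$, which is unique by Lemma~\ref{lem:classic-Pythag-param}. The main obstacle I anticipate is not conceptual but bookkeeping: tracking $2$-adic valuations cleanly through the role-swap, and verifying that the substitution $(m',n')\leftrightarrow(m'+n',m'-n')$ is a bijection between coprime opposite-parity pairs with $m'>n'>0$ and coprime all-odd pairs with $m>n>0$.
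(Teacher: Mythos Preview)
Your proof is correct and follows essentially the same route as the paper: reduce to the swapped primitive parametrization (with $a/d$ playing the even leg), then apply the linear change $(m',n')\mapsto(m'+n',m'-n')$ together with $\ell=d/2$. The only cosmetic difference is in uniqueness, where the paper argues directly from $c\pm a$ to pin down $m/n$, while you recover $\ell$ via $\gcd(a,b,c)=2\ell$ and then invoke uniqueness of the primitive parameters; both work, and your converse bound $\nu_2(m^2-n^2)\ge 3$ is in fact sharper than the paper's $\ge 2$.
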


\begin{proof}
Suppose $\nu_2 (a) > \nu_b (b) \ge 1$ for a Pythagorean triple $(a,b,c)$. By the comment after Lemma~\ref{lem:classic-Pythag-param}, we know that there exist unique positive integers $\ell_0,m_0,n_0$ such that
$$(a,b,c) = (\ell_0 (2m_0 n_0),\ell_0 (m_0^2 - n_0^2), \ell_0 (m_0^2 + n_0^2)),$$
where $m_0$ and $n_0$ are coprime with opposite parities and $m_0>n_0$. For the construction, let
$$(\ell,m,n) = \left(\frac{\ell_0}{2},m_0+n_0,m_0-n_0\right),$$
where $\ell = \frac{\ell_0}{2}$ is necessarily an integer because $$b = \ell_0 (m_0 - n_0)(m_0+n_0)$$
is even with both $m_0 - n_0$ and $m_0 + n_0$ odd, so $2\mid \ell_0$.
We can check that
\begin{align*}
    \ell(m^2 - n^2) &= \frac{\ell_0}{2}\left[(m_0+n_0)^2-(m_0-n_0)^2\right] = \ell_0(2m_0 n_0) = a,\\
    \ell(2mn) &= \frac{\ell_0}{2}\left[2(m_0+n_0)(m_0-n_0)\right] = \ell(m_0^2 - n_0^2) = b,\\
    \ell(m^2 + n^2) &= \frac{\ell_0}{2}\left[(m_0+n_0)^2+(m_0-n_0)^2\right] = \ell_0(m_0^2 + n_0^2) = c.
\end{align*}
For uniqueness, suppose $(\ell_1, m_1, n_1)$ is another such triple that could replace $(\ell,m,n)$ in the parametrization of $(a,b,c)$. Using the equations
\begin{align*}
    \ell(m^2 - n^2) &= a = \ell_1(m_1^2 - n_1^2),\\
    \ell(2mn) &= b = \ell_1(2m_1 n_1),\\
    \ell(m^2 + n^2) &= c = \ell_1(m_1^2 + n_1^2),
\end{align*}
we get
$$\begin{cases}
    2\ell m^2 = c+a = 2\ell_1 m_1^2\\
    2\ell n^2 = c-a = 2\ell_1 n_1^2
\end{cases}
\implies \frac{m}{n} = \frac{m_1}{n_1}.
$$
Since $\gcd(m,n) = 1 = \gcd(m_1,n_1)$, these fractions are in the unique lowest terms, so $m=m_1$ and $n=n_1$. Then, $$\ell(2mn) = \ell_1(2m_1 n_1)$$ leads to $\ell = \ell_1$.

For the necessary criteria, we check them one by one. Firstly,
$$m = m_0 + n_0 > m_0 - n_0 = n.$$
Since $m_0$ and $n_0$ have opposite parities, $m=m_0+n_0$ and $n=m_0-n_0$ are both odd. Then, the Euclidean algorithm tells us that
\begin{align*}
    \gcd(m,n) &= \gcd(m_0 + n_0, m_0 - n_0) = \gcd(m_0+n_0,2m_0) \\
    &= \gcd(m_0+n_0,m_0) = \gcd(n_0,m_0) = 1.
\end{align*}
Lastly, it is easy to algebraically check that \eqref{eqn:general-Pythag-prelim-2} leads to $a^2 + b^2 = c^2$, with
\begin{align*}
    \nu_2 (a) &= \nu_2 (\ell(m-n)(m+n)) \ge \nu_2 (\ell) + 2\\
    &> \nu_2 (\ell) + 1 = \nu_2 (\ell(2mn)) = \nu_2 (b).
\end{align*}
\end{proof}

By combining the comment after Lemma~\ref{lem:classic-Pythag-param} (for $\nu_2 (a) < \nu_2 (b)$) with Lemma~\ref{lem:flipped-Pythag-param} (for $\nu_2 (a) > \nu_2 (b) \ge 1$), we obtain the following Lemma~\ref{lem:useful-Pythag-param}, which parametrizes all Pythagorean triples for which $2\mid b$, in a manner that retains the coprimality of $m$ and $n$, $m>n$, and the structure of the parametric forms, while being ambiguous about which of $\nu_2 (a)$ and $\nu_2 (b)$ is greater.

\begin{lemma}\label{lem:useful-Pythag-param}
If $(a,b,c)$ is Pythagorean triple (not necessarily primitive), meaning $a^2 + b^2 = c^2$, such that $2\mid b$, then there exist unique positive integers $(\ell,m,n)$ such that
\begin{equation}\label{eqn:useful-Pythag-triple}
    (a,b,c) = (\ell(m^2-n^2),\ell(2mn),\ell(m^2+n^2)).
\end{equation}
Moreover, it follows that $m>n$ and $m,n$ are coprime. Conversely, any triple of positive integers $(\ell,m,n)$ with $m,n$ coprime and $m>n$ leads to the triple $(a,b,c)$, defined by \eqref{eqn:useful-Pythag-triple}, being a Pythagorean triple with $b$ even.
\end{lemma}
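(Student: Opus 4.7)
The strategy is to splice together the classical parametrization (equation~(2.5), following Lemma~\ref{lem:classic-Pythag-param}) for the regime $\nu_2(a)<\nu_2(b)$ and the flipped parametrization (Lemma~\ref{lem:flipped-Pythag-param}) for the regime $\nu_2(a)>\nu_2(b)\ge 1$, after first ruling out the equality case. The plan is to check that these two regimes exhaust all possibilities under the hypothesis $2\mid b$, so that every such triple falls into exactly one of them.

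First I would dispose of the case $\nu_2(a)=\nu_2(b)$: writing $a=2^e a'$ and $b=2^e b'$ with $a',b'$ odd gives $c^2=2^{2e}(a'^2+b'^2)$, and since $a'^2+b'^2\equiv 2\pmod 4$, this would force $\nu_2(c^2)$ to be odd, a contradiction. So, given $2\mid b$, exactly one of $\nu_2(a)<\nu_2(b)$ or $\nu_2(a)>\nu_2(b)\ge 1$ holds. In the first subcase, the remark following Lemma~\ref{lem:classic-Pythag-param} directly produces unique $(\ell,m,n)$ of the required form (with the extra property that $m,n$ have opposite parities), and in the second subcase, Lemma~\ref{lem:flipped-Pythag-param} supplies unique $(\ell,m,n)$ of the same shape (with $m,n$ both odd). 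In either subcase the triple $(\ell,m,n)$ is positive, has $m>n$, and has $\gcd(m,n)=1$, as required.

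For uniqueness across the two regimes, I would observe that the subcase is forced by $(a,b,c)$ itself. In the first subcase, $m$ and $n$ have opposite parities, so $m^2-n^2$ is odd, giving $\nu_2(a)=\nu_2(\ell)<\nu_2(\ell)+1=\nu_2(b)$. In the second subcase, $m$ and $n$ are both odd, so $(m-n)(m+n)$ is divisible by $8$ (each odd square being $\equiv 1\pmod 8$), and hence $\nu_2(a)\ge \nu_2(\ell)+3>\nu_2(\ell)+1=\nu_2(b)$. Thus the regime is pinned down by the comparison of $\nu_2(a)$ and $\nu_2(b)$, and uniqueness of $(\ell,m,n)$ within each regime then yields overall uniqueness. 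The converse direction is immediate: for any positive integers $(\ell,m,n)$ with $m>n$ and $\gcd(m,n)=1$, the algebraic identity $(m^2-n^2)^2+(2mn)^2=(m^2+n^2)^2$ makes $(a,b,c)$ a Pythagorean triple, and $b=2\ell mn$ is visibly even.

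The only non-routine step is the cross-case uniqueness, but once one notices that $\nu_2(a)$ versus $\nu_2(b)$ is an invariant of the triple and is rigidly linked to the parity behaviour of $m$ and $n$ in each parametrization, this obstacle dissolves and the statement reduces to a bookkeeping merge of the two earlier lemmas.
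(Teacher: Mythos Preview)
Your proof is correct and follows exactly the route the paper intends: the paper states Lemma~\ref{lem:useful-Pythag-param} as the combination of the remark after Lemma~\ref{lem:classic-Pythag-param} (for $\nu_2(a)<\nu_2(b)$) with Lemma~\ref{lem:flipped-Pythag-param} (for $\nu_2(a)>\nu_2(b)\ge 1$), without spelling out a separate proof. You have simply made explicit the two points the paper leaves to the reader---the impossibility of $\nu_2(a)=\nu_2(b)$ and the cross-regime uniqueness via the parity pattern of $(m,n)$---so there is nothing to add.
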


Now, Lemma~\ref{lem:useful-Pythag-param} will allow us to uniquely parametrize the set of triples $(x,y,B^k)$ over all digital anomalies $(x,y,B,k)$, thanks to Lemma~\ref{lem:Pythagorean-equivalence}.

\begin{theorem}\label{thm:anomaly-param}
The set of all triples $(x,y,B^k)$, such that $(x,y,B,k)$ is a digital anomaly, is parametrized uniquely by triples of positive integers $(t,m,n)$ such that $m>n$ and $m,n$ are coprime, as
\begin{equation}\label{eqn:digital-param}
    (x,y,B^k) = (tm(m-n)^2, \sqrt{tn}(m-n),m\sqrt{tn}),
\end{equation}
where the parameters must satisfy
\begin{align}
    t(m-n)^4 &< n, \label{eqn:lower-digital-ineq}\\ 
    n^{k-1} &\le m^2 t^{k+1} (m-n)^{4k}. \label{eqn:upper-digital-ineq}
\end{align}
Conversely, any positive integers $(x,y,B,k)$ which satisfy \eqref{eqn:digital-param}, for some coprime $m,n$ such that $m>n$, and that fulfill \eqref{eqn:lower-digital-ineq} and \eqref{eqn:upper-digital-ineq}, will necessarily be a digital anomaly.
\end{theorem}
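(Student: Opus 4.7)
The plan is to chain together Lemmas~\ref{lem:Pythagorean-equivalence} and \ref{lem:useful-Pythag-param}. A digital anomaly $(x,y,B,k)$ gives rise, via Lemma~\ref{lem:Pythagorean-equivalence}, to the Pythagorean triple
\[
(a,b,c) = \bigl(x+2B^k y,\; 2B^{2k},\; x+2B^{2k}\bigr),
\]
whose middle entry is even. Lemma~\ref{lem:useful-Pythag-param} then produces unique positive integers $(\ell,m,n)$ with $\gcd(m,n)=1$ and $m>n$ such that $(a,b,c)=(\ell(m^2-n^2),\,2\ell mn,\,\ell(m^2+n^2))$. The relation $b=2\ell mn$ gives $B^{2k}=\ell mn$, while the ratios $(c-a)/b = n/m$ and $(c+a)/b = m/n$ yield, after a short calculation,
\[
y=\frac{B^k(m-n)}{m},\qquad x=\frac{B^{2k}(m-n)^2}{mn}.
\]

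The main subtle step, which I expect to be the one real obstacle, is to show that $\ell/m$ is a positive integer so that the parameter $t := \ell/m$ is well-defined. Since $y$ is an integer and $\gcd(m,m-n)=\gcd(m,n)=1$, divisibility forces $m\mid B^k$; writing $B^k=mu$ for a positive integer $u$ and plugging into $B^{2k}=\ell mn$ gives $\ell=mu^2/n$. As $\ell$ is an integer and $\gcd(m,n)=1$, this forces $n\mid u^2$, and we may set $t:=u^2/n=\ell/m$, which is a positive integer. In particular $tn=u^2$ is automatically a perfect square with $\sqrt{tn}=u$, so substituting back yields exactly \eqref{eqn:digital-param}.

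Next, I translate the inequality \eqref{eqn:unrefined-ineq} into the conditions \eqref{eqn:lower-digital-ineq} and \eqref{eqn:upper-digital-ineq}. The constraint $x<B^k$ becomes $t(m-n)^2<\sqrt{tn}$ after dividing by $m$; squaring both positive sides yields \eqref{eqn:lower-digital-ineq}. The constraint $x\ge B^{k-1}$, raised to the $2k$-th power, becomes $x^{2k}\ge (B^{2k})^{k-1}$, which after substituting $x=tm(m-n)^2$ and $B^{2k}=m^2 tn$ and cancelling common factors simplifies to \eqref{eqn:upper-digital-ineq}. Both operations are reversible on positive reals, so the translated inequalities are in fact equivalent to \eqref{eqn:unrefined-ineq}.

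For the converse and uniqueness, I take $(t,m,n)$ satisfying the hypotheses (so that $x,y,B^k$ from \eqref{eqn:digital-param} are positive integers), verify the identity $x/y-y=x/B^k$ by direct substitution, noting that both sides collapse to $t(m-n)^2/\sqrt{tn}$, and use the reversibility just noted to recover \eqref{eqn:unrefined-ineq} from \eqref{eqn:lower-digital-ineq} and \eqref{eqn:upper-digital-ineq}. Uniqueness of the correspondence $(x,y,B^k)\leftrightarrow(t,m,n)$ then follows immediately from the uniqueness clause of Lemma~\ref{lem:useful-Pythag-param}, since any two parameter triples producing the same $(x,y,B^k)$ produce the same Pythagorean triple, hence the same $(\ell,m,n)$, and therefore the same $t=\ell/m$.
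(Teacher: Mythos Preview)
Your proposal is correct and follows essentially the same route as the paper: apply Lemma~\ref{lem:Pythagorean-equivalence} to get a Pythagorean triple with even middle entry, invoke Lemma~\ref{lem:useful-Pythag-param} for unique $(\ell,m,n)$, solve the linear system for $(x,y,B^k)$, argue that $m\mid\ell$ so that $t:=\ell/m$ is an integer, and then translate \eqref{eqn:unrefined-ineq} into \eqref{eqn:lower-digital-ineq}--\eqref{eqn:upper-digital-ineq} by squaring and raising to the $2k$-th power. The only cosmetic difference is that you first deduce $m\mid B^k$ from the integrality of $y=B^k(m-n)/m$ and then set $u=B^k/m$, $t=u^2/n$, whereas the paper argues directly from $y^2=(\ell n/m)(m-n)^2$ that $m\mid\ell$; your variant has the small bonus of exhibiting $\sqrt{tn}=u$ explicitly as an integer.
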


\begin{proof}
By Lemma~\ref{lem:Pythagorean-equivalence}, $(x,y,B,k)$ is a digital anomaly if and only if
$$(x+2B^k y, 2B^{2k}, x+2B^{2k})$$
is a Pythagorean triple. Since the second entry, $2B^{2k}$, is even, Lemma~\ref{lem:useful-Pythag-param} tells us that there exist unique positive integers $(\ell,m,n)$ such that
\begin{equation}\label{eqn:lmn-to-abc-injection}
    (x+2B^k y, 2B^{2k}, x+2B^{2k}) = (\ell(m^2-n^2),\ell(2mn),\ell(m^2-n^2)),
\end{equation}
where the uniqueness is in correspondence with with values of 
$$(a,b,c) = (x+2B^k y, 2B^{2k}, x+2B^{2k}).$$
Moreover, $m>n$ and $m,n$ are coprime. Solving the system for $x,y,$ and $B^k$ yields
\begin{equation}\label{eqn:preliminary-digital-param}
\begin{aligned}
    x &= (x+2B^{2k}) - 2B^{2k} = \ell(m^2+n^2) - \ell(2mn) = \ell(m-n)^2,\\
    B^k &= \sqrt{\frac{2B^{2k}}{2}} = \sqrt{\frac{\ell(2mn)}{2}} = \sqrt{\ell mn},\\
    y &= \frac{(x+2B^k y) -x}{2B^k} = \frac{\ell(m^2-n^2)-\ell(m-n)^2}{2\sqrt{\ell mn}} = \sqrt{\frac{\ell n}{m}}(m-n).
\end{aligned}
\end{equation}

We also check that $B\ge 2$, which holds because $m>n\ge 1$ implies $B=(\ell mn)^{\frac{1}{2k}} >1$, where $B$ is an integer.

To clarify, this shows that the $(a,b,c)$ are in bijection with the $(x,y,B^k)$, and our injection from the $(\ell,m,n)$ to the $(a,b,c)$ in \eqref{eqn:lmn-to-abc-injection} has now led to an injection from the $(\ell,m,n)$ to the $(x,y,B^k)$.

The expression found for $y$ requires that $m\mid \ell n$, but $m$ and $n$ are coprime, so $m\mid \ell$. Then, there exists a unique positive integer $t$ such that $mt = \ell$, and replacing each $\ell$ in \eqref{eqn:preliminary-digital-param} with $mt$ gives $(x,y,B^k)$ in the form of \eqref{eqn:digital-param}.

For the inequalities in \eqref{eqn:lower-digital-ineq} and \eqref{eqn:upper-digital-ineq}, we show their equivalence with $B^{k-1} \le x < B^k$ from \eqref{eqn:unrefined-ineq}:
\begin{align*}
    x < B^k &\iff tm(m-n)^2 < m\sqrt{tn}\\
    &\iff t(m-n)^4 < n
\end{align*}
and
\begin{align*}
    B^{k-1} \le x &\iff \left[(B^{k})^2\right]^{k-1} \le x^{2k}\\
    &\iff [(m\sqrt{tn})^{2}]^{k-1} \le (tm(m-n)^2)^{2k}\\
    &\iff m^{2k-2} t^{k-1} n^{k-1} \le t^{2k} m^{2k} (m-n)^{4k}\\
    &\iff n^{k-1} \le m^2 t^{k+1} (m-n)^{4k}.
\end{align*}

For the converse, we can perform a direct computation to show that $(x,y,B^k)$ in the form of \eqref{eqn:digital-param} causes $(x,y,B,k)$ to satisfy the original \eqref{eqn:main-anomaly}:
\begin{align*}
    y+\frac{x}{B^k} &= \sqrt{tn}(m-n) + \frac{tm(m-n)^2}{m\sqrt{tn}}\\
    &= \frac{tmn(m-n)+tm(m-n)^2}{m\sqrt{tn}}\\
    &= \frac{tm(m-n)(n+(m-n))}{m\sqrt{tn}}\\
    &= \frac{tm^2 (m-n)}{m\sqrt{tn}} = \frac{tm(m-n)^2}{\sqrt{tn}(m-n)} = \frac{x}{y}.
\end{align*}
The inequalities were already shown to imply $B^{k-1} \le x < B^k$, which is the last requirement for $(x,y,B,k)$ to be a digital anomaly.
\end{proof}

We make some comments about Theorem~\ref{thm:anomaly-param}:
\begin{itemize}
    \item It is necessary that $tn$ is a square.
    \item Since the described set of $(t,m,n)$ are in bijection with the set of $(x,y,B^k)$ over all digital anomalies $(x,y,B,k)$, via the correspondence in \eqref{eqn:digital-param}, we will freely refer to the parameters $(t,m,n)$ for any given $(x,y,B^k)$ throughout the rest of the paper.
    \item The result parametrizes $B^k$, but it does not necessarily uniquely determine $B$ or $k$ separately. At least in this approach through Pythagorean triples, it appears difficult to decouple $B$ and $k$, but the following result shows that $k$ is determined by $m,n,$ and $B$.
\end{itemize}

\begin{corollary}\label{crl:k-in-mnB}
For any digital anomaly $(x,y,B,k)$,
\begin{equation}\label{eqn:k-using-mnB}
B^k < \frac{mn}{(m-n)^2} \le B^{k+1}.
\end{equation}
Consequently,
$$k = \left\lceil \log_B \frac{mn}{(m-n)^2} \right\rceil -1.$$
\end{corollary}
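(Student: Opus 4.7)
The plan is to derive both halves of the sandwiching inequality \eqref{eqn:k-using-mnB} as direct algebraic reformulations of the two digit-count bounds $B^{k-1}\le x<B^k$, using the parametric identities from Theorem~\ref{thm:anomaly-param}. The key substitutions I would keep in hand are $x=tm(m-n)^2$ and the squared form $B^{2k}=m^2 tn$ (equivalently $B^k=m\sqrt{tn}$). With these, the ratio $\frac{mn}{(m-n)^2}$ should compare cleanly against powers of $B$.

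For the upper half, $B^k<\frac{mn}{(m-n)^2}$, I would square both sides so as to remove the root, reducing the inequality to $m^2 tn<\frac{m^2 n^2}{(m-n)^4}$, i.e., $t(m-n)^4<n$. But this is exactly the already-established inequality \eqref{eqn:lower-digital-ineq} (which in the proof of Theorem~\ref{thm:anomaly-param} was shown to be equivalent to $x<B^k$). So this direction is immediate once the translation is done.

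For the lower half, $\frac{mn}{(m-n)^2}\le B^{k+1}$, I would again square, this time obtaining $\frac{m^2 n^2}{(m-n)^4}\le B^{2k+2}=B^2\cdot m^2 tn$, which rearranges to $n\le tB^2(m-n)^4$. I would derive this directly from $B^{k-1}\le x$: writing $B^{k-1}\le tm(m-n)^2$, multiplying by $B$ to get $B^k\le tmB(m-n)^2$, substituting $B^k=m\sqrt{tn}$, and squaring gives precisely $n\le tB^2(m-n)^4$. No obstacle here; it is routine algebra once one notices that \eqref{eqn:unrefined-ineq} is the right source rather than the more cumbersome \eqref{eqn:upper-digital-ineq}.

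Finally, for the ceiling formula, I would take $\log_B$ across \eqref{eqn:k-using-mnB} to obtain $k<\log_B\frac{mn}{(m-n)^2}\le k+1$. Since the integer $k+1$ is the least integer that is $\ge \log_B\frac{mn}{(m-n)^2}$ (the left bound is strict, the right is weak), we conclude $\lceil\log_B\frac{mn}{(m-n)^2}\rceil=k+1$, which rearranges to the stated formula. There is no genuine obstacle in this proof; the only thing to get right is which of the two inequalities from Theorem~\ref{thm:anomaly-param} corresponds to which bound, and in particular to remember that $B^{k-1}\le x$ is more convenient than the inflated form \eqref{eqn:upper-digital-ineq} that was needed previously in the parametrization argument.
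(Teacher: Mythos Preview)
Your proof is correct. For the first inequality $B^k<\frac{mn}{(m-n)^2}$ and for the ceiling formula you proceed exactly as the paper does: reduce to \eqref{eqn:lower-digital-ineq} via the substitution $B^{2k}=m^2tn$, and then read off the ceiling identity from the strict/weak pattern of the double inequality.

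For the second inequality $\frac{mn}{(m-n)^2}\le B^{k+1}$ your route differs slightly from the paper's. The paper raises both sides to the $2k$-th power and shows the inequality is literally equivalent to \eqref{eqn:upper-digital-ineq}, i.e., to $n^{k-1}\le m^2 t^{k+1}(m-n)^{4k}$. You instead go back to the primitive digit bound $B^{k-1}\le x$ from \eqref{eqn:unrefined-ineq}, multiply by $B$, substitute $B^k=m\sqrt{tn}$, and square once to obtain $n\le tB^2(m-n)^4$, which is exactly the squared form of the target. Your path is a bit shorter and avoids carrying the exponent $k$ through the algebra; the paper's path has the virtue of displaying that \eqref{eqn:k-using-mnB} is in fact a reversible reformulation of \eqref{eqn:lower-digital-ineq}--\eqref{eqn:upper-digital-ineq}, not merely a consequence. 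Either way the substance is the same.
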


\begin{proof}
While figuring out the double inequality in \eqref{eqn:k-using-mnB} was the result of experimentation with manipulations that cannot be recounted here, it can be proven quickly in a backwards manner, since the result has already been articulated:
\begin{align*}
    B^k < \frac{mn}{(m-n)^2} &\iff B^k (m-n)^2 < mn\\
    &\iff m\sqrt{tn}(m-n)^2<mn\\
    &\iff t(m-n)^4 <n,
\end{align*}
which we know from \eqref{eqn:lower-digital-ineq}. Secondly,
\begin{align*}
    \frac{mn}{(m-n)^2} \le B^{k+1} &\iff mn \le B^{k+1}(m-n)^2\\
    &\iff m^{2k} n^{2k} \le [(B^k)^2]^{k+1}(m-n)^{4k}\\
    &\iff m^{2k} n^{2k} \le [(m\sqrt{tn})^2]^{k+1} (m-n)^{4k}\\
    &\iff m^{2k} n^{2k} \le m^{2k+2} t^{k+1}n^{k+1} (m-n)^{2k}\\
    &\iff n^{k-1} \le m^2 t^{k+1} (m-n)^{4k},
\end{align*}
which is \eqref{eqn:upper-digital-ineq}. For the consequence, \cite[p.~69]{Knuth} tells us that
$$\lceil\alpha\rceil = \beta \iff \beta - 1 < \alpha \le \beta,$$
for real numbers $\alpha,\beta$, so
\begin{align*}
     B^k < \frac{mn}{(m-n)^2} \le B^{k+1} &\iff k < \log_B \frac{mn}{(m-n)^2} \le k+1\\
     &\iff \left\lceil \log_B \frac{mn}{(m-n)^2} \right\rceil = k+1.
\end{align*}
This is equivalent to what we sought.
\end{proof}

Corollary~\ref{crl:k-in-mnB} is not immediately useful to us, since $B$ is not parametrized in terms of $m$ and $n$. However, it allows for the formulation of a heuristic argument for why digital anomalies with larger $k$ are increasingly rarer as $k\to \infty$.

The heuristic argument involves considering how $B^k$ might be equal to a different power $C^j$, and how that affects the bounds in \eqref{eqn:k-using-mnB}. First, we try to make $j$ less than $k$. For a given triple $(t,m,n)$ to produce a digital anomaly $(x,y,B,k)$, it is necessary that $B^k = m\sqrt{tn}$ is a perfect $k^{\text{th}}$ power and that \eqref{eqn:k-using-mnB} holds. Then, for any positive integers $C$ and $j$ such that $j<k$ and $C^j = B^k$, since
$$j<k\iff k+1 < k\left(1+\frac{1}{j}\right),$$
we find that
$$C^j = B^k < \frac{mn}{(m-n)^2} \le B^{k+1} < (B^k)^{1+\frac{1}{j}} = (B^k)^{\frac{j+1}{j}} = (B^{\frac{k}{j}})^{j+1} = C^{j+1}.$$

So, every positive integer $j$ such that $B^k = C^j$ and $j<k$ works, which means we can always go to the bottom by letting the exponent be $j=1$.

However, going from $k=1$ to higher exponents $j$ shows that $\frac{mn}{(m-n)^2}$ gets squeezed into a very tight interval, as follows. Suppose $$B^1 < \frac{mn}{(m-n)^2} \le B^2$$
and that $C^j = B^1$ also works. Then
$$B^1 = C^j < \frac{mn}{(m-n)^2} \le C^{j+1} = (B^{\frac{1}{j}})^{j+1} = B^{1+\frac{1}{j}}\le  B^2,$$
where we see that the upper bound of $B^{1+\frac{1}{j}}$ is closer to $\frac{mn}{(m-n)^2}$ than the original $B^2$. As $j$ increases, the interval in which $\log_B \frac{mn}{(m-n)^2}$ has to lie becomes narrower, from $(1,2]$ to $\left(1,1+\frac{1}{j}\right]$.

As a side note, we remark that the highest exponent $k$, such that $B^k = m\sqrt{tn}$ for some integer $B\ge 2$, is the greatest common divisor $g$ of the multiplicities of the prime power factors of $m\sqrt{tn}$. Due to the fact that the common divisors of a set of integers are precisely the factors of their greatest common divisor, we know that $m\sqrt{tn} = C^j$ is a perfect $j^{\text{th}}$ power for some positive integer $C$ if and only if $j$ divides $g$. However, there is no guarantee that $g$ is the highest exponent that will satisfy the inequalities \eqref{eqn:k-using-mnB}.

\section{Infinite Families}

In this section, we look at families of digital anomalies $(x,y,B,k)$ in which $\gcd(x,y)=1$ or $\gcd(x,y) = d$ for each fixed integer $d>1$. The original example of $\frac{5}{2} = 2.5$ is in lowest terms, so it is not yet clear whether there exist examples in which $\frac{x}{y}$ is not in lowest terms.

\begin{theorem}
The set of all digital anomalies $(x,y,B,k)$, such that $x$ and $y$ are coprime, is given by
\begin{equation}\label{eqn:coprime-anomaly-param}
    (x,y,B,k) = (s^2+1,s,(s^2+1)s,1),
\end{equation}
ranging over all integers $s\ge 2$. It is clear that there is no repetition.
\end{theorem}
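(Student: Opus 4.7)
The plan is to leverage Theorem~\ref{thm:anomaly-param} and reduce the classification to a small case analysis. First I would invoke the parametrization $(x,y,B^k) = (tm(m-n)^2, \sqrt{tn}(m-n), m\sqrt{tn})$ with $m > n$ coprime and $tn$ a perfect square. Since $m-n$ divides both $x$ and $y$, the hypothesis $\gcd(x,y)=1$ immediately forces $m = n+1$, collapsing the expressions to $x = tm$, $y = \sqrt{tn}$, and $B^k = m\sqrt{tn}$.

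Next I would run a prime-by-prime valuation argument to pin down $t$ and $n$. For each prime $p$, set $a = \nu_p(t)$, $b = \nu_p(n)$, $c = \nu_p(m)$. From $\gcd(m,n)=1$, at most one of $b,c$ is positive, and from $tn$ being a square, $a+b$ is even. The requirement $\min(a+c, (a+b)/2) = 0$ is then checked in the three subcases $p\nmid mn$, $p\mid m$, and $p\mid n$; in each case, $a = 0$ is forced, and when $p\mid n$ one additionally gets that $b$ is even. Hence $t = 1$ and $n$ is a perfect square, say $n = s^2$, yielding $(x,y,B^k) = (s^2+1,\, s,\, s(s^2+1))$.

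Then I would extract the allowable $s$ and $k$ from the two inequalities of the theorem. The lower inequality \eqref{eqn:lower-digital-ineq} reduces to $1 < s^2$, giving $s \ge 2$. The upper inequality \eqref{eqn:upper-digital-ineq} reduces to $s^{k-1} \le s^2 + 1$, which for $s \ge 2$ forces $k \in \{1,2,3\}$.

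The main obstacle is ruling out $k = 2$ and $k = 3$, for which I plan to exploit the coprimality of $s$ and $s^2+1$. If $k=2$, then $B^2 = s(s^2+1)$ with coprime factors forces both $s$ and $s^2+1$ to be perfect squares, contradicting the fact that two consecutive positive integers cannot both be squares. If $k=3$, then both factors must be cubes: writing $s = u^3$ and $s^2+1 = v^3$ gives $v^3 - u^6 = 1$, but $v \ge u^2+1$ yields $v^3 \ge (u^2+1)^3 > u^6 + 1$ for $u \ge 1$, a contradiction. This leaves only $k = 1$ and $B = s(s^2+1)$, producing the family \eqref{eqn:coprime-anomaly-param}. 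Uniqueness across $s$ is evident since $y = s$ recovers the parameter.
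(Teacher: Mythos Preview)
Your proof is correct and follows essentially the same route as the paper: invoke the parametrization, force $t=1$ and $m-n=1$, write $n=s^2$, then use $\gcd(s,s^2+1)=1$ to rule out $k\ge 2$. The paper's execution is a bit slicker in two spots---it obtains $t=1$ and $m-n=1$ in one stroke via $\gcd(x,y^2)=t(m-n)^2\gcd(m,n)=t(m-n)^2=1$, and it dispatches all $k\ge 2$ uniformly by observing that $s^2$ and $s^2+1$ would be consecutive perfect $k$-th powers---whereas your prime-valuation argument and your separate $k=2,3$ cases (after first bounding $k\le 3$ via \eqref{eqn:upper-digital-ineq}) reach the same conclusion by slightly longer paths.
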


\begin{proof}
The key is to use our workhorse, equation \eqref{thm:anomaly-param} from Theorem~\ref{thm:anomaly-param}. To deduce the structure of digital anomalies $(x,y,B,k)$ such that $x$ and $y$ are coprime, we let $$(x,y,B^k) = (tm(m-n)^2, \sqrt{tn}(m-n),m\sqrt{tn}),$$
where $t,m,n$ are unique positive integers such that $m>n$ and $m,n$ are coprime.

Suppose $\gcd(x,y) = 1$. Then, $\gcd(x,y^2) = 1$ as well, so
\begin{align*}
    1 &= \gcd(x,y^2) = \gcd(tm(m-n)^2,tn(m-n)^2)\\
    &= t(m-n)^2 \cdot \gcd(m,n) = t(m-n)^2,
\end{align*}
which gives $t=1$ and $m = n+1$. This causes the simplification
\begin{equation}\label{eqn:preliminary-non-coprime-param}
    (x,y,B^k) = (n+1, \sqrt{n}, (n+1)\sqrt{n}).
\end{equation}
Then, $n$ must be a square, so we let $y^2 = n = s^2$ for some integer $s = y \ge 2$, which turns \eqref{eqn:preliminary-non-coprime-param} into
\begin{equation}\label{eqn:near-non-coprime-param}
    (x,y,B^k) = (s^2+1, s, (s^2+1)s).
\end{equation}

We can see that \eqref{eqn:near-non-coprime-param} satisfies \eqref{eqn:main-anomaly} by checking,
$$\frac{x}{y} = \frac{s^2 +1}{s} = s+\frac{1}{s} = s + \frac{s^2 + 1}{(s^2 +1)s} = y + \frac{x}{B^k}.$$
To prove the inequality \eqref{eqn:unrefined-ineq}, we find $k$ first. Since $B^k = s(s^2 +1)$ and $\gcd(s,s^2 +1)$ are coprime, we find that $s$ and $s^2 +1$ must both be perfect $k^{\text{th}}$ powers. Then, $s^2$ and $s^2 +1$ are consecutive positive integers that are both perfect $k^{\text{th}}$ powers, which is impossible for $k\ge 2$ due to the widening gap. So, $k=1$, and we prove \eqref{eqn:unrefined-ineq},
$$B^{k-1} = B^0 = 1 \le x = s^2 + 1 < (s^2 +1)s = B^k.$$ Therefore, the parametrization is complete.
\end{proof}

\begin{example}
The first two examples with $\gcd(x,y)=1$ are:
\begin{itemize}
    \item If $s=2$ is substituted into \eqref{eqn:coprime-anomaly-param}, then $(x,y,B,k)=(5,2,10,1)$. So, the original observation that $\frac{5}{2} = 2.5$ in base $10$ is the first example with coprime $x$ and $y$.
    \item If $s=3$, then $(x,y,B,k) = (10,3,30,1)$. We can write $10_{10}=a_{30}$, since $9<10<30$ means that $10$ is a digit in base $30$ but not in base $10$. Then, the computation works out as
    \begin{align*}
        \left(\frac{a}{3}\right)_{30} = \left(\frac{10}{3}\right)_{10} = 3_{10} + \left(\frac{1}{3}\right)_{10} &= 3_{10}+\left(\frac{10}{30}\right)_{10}\\
        &= 3_{30} + \left(\frac{a}{10}\right)_{30} = (3.a)_{30}.
    \end{align*}
\end{itemize}
\end{example}

Although it is challenging to find \textit{all} digital anomalies $(x,y,B,k)$ where $\gcd(x,y)$ has a fixed value $d>1$, we can parametrize an infinite family of such solutions in each case.

\begin{theorem}\label{thm:gcd-above-1-family}
For each integer $d>1$, an infinite family of digital anomalies $(x,y,B,k)$ such that $\gcd(x,y)=d$ is given by
\begin{equation}\label{eqn:gcd-above-1-family}
    (x,y,B,k) = ((s^2+d)d^2,sd,(s^2+d)s,1),
\end{equation}
where $s$ is any positive integer that is coprime to $d$ and $s\ge d^2 + 1$.
\end{theorem}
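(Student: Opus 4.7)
The plan is to invoke the converse direction of Theorem~\ref{thm:anomaly-param} by exhibiting the parameter triple $(t,m,n)$ which produces the claimed family, verify the two inequalities \eqref{eqn:lower-digital-ineq} and \eqref{eqn:upper-digital-ineq} for $k=1$, and then compute the $\gcd$ directly.

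To identify $(t,m,n)$, I would reverse-engineer from \eqref{eqn:digital-param}: matching $\sqrt{tn}(m-n) = sd$ and $m\sqrt{tn} = (s^2+d)s$ forces $m/(m-n) = (s^2+d)/d$, so that $m/n = (s^2+d)/s^2$. Taking $(t,m,n) = (1, s^2+d, s^2)$ gives $m-n = d$, and the hypothesis $\gcd(s,d)=1$ implies $\gcd(s^2+d, s^2) = \gcd(d, s^2) = 1$, so all the standing hypotheses of Theorem~\ref{thm:anomaly-param} are met. Direct substitution into \eqref{eqn:digital-param} recovers $(x,y,B^k) = ((s^2+d)d^2, sd, (s^2+d)s)$.

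Next, I would check that $k=1$ is consistent with the inequalities. Inequality \eqref{eqn:lower-digital-ineq} becomes $d^4 < s^2$, equivalent to $s > d^2$, which is exactly the hypothesis $s \geq d^2 + 1$. Inequality \eqref{eqn:upper-digital-ineq} with $k=1$ reads $n^0 = 1 \leq m^2 t^2 (m-n)^4$, which is trivially true. By the converse direction of Theorem~\ref{thm:anomaly-param}, the tuple $(x,y,B,k)$ is therefore a digital anomaly. For the $\gcd$ computation, since $\gcd(s,d)=1$ we have
\[
\gcd(x,y) = d \cdot \gcd((s^2+d)d, s) = d \cdot \gcd(s^2+d, s) = d \cdot \gcd(d, s) = d,
\]
and varying $s$ over the infinitely many integers $\geq d^2+1$ coprime to $d$ yields infinitely many distinct anomalies, since $y = sd$ is strictly increasing in $s$.

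The argument is almost entirely a verification once the right triple is identified, so I do not anticipate a substantive obstacle. The only mildly subtle point is choosing $(m,n) = (s^2+d, s^2)$ so that $m-n = d$; this is what propagates the factor of $d$ through the formulas in \eqref{eqn:digital-param} and makes $\gcd(x,y)$ land exactly at $d$ rather than at some divisor or multiple of it.
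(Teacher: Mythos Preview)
Your proof is correct and follows essentially the same approach as the paper: both use the parameter triple $(t,m,n)=(1,s^2+d,s^2)$ in Theorem~\ref{thm:anomaly-param}, reduce \eqref{eqn:lower-digital-ineq} to $s\ge d^2+1$, note that \eqref{eqn:upper-digital-ineq} is vacuous at $k=1$, and compute $\gcd(x,y)=d$ from $\gcd(s,d)=1$. The only cosmetic difference is that the paper arrives at $(t,m,n)$ by first writing $\gcd(x,y)=(m-n)\gcd(tm(m-n),\sqrt{tn})$ and then forcing $t=1$, $m-n=d$, $n=s^2$, whereas you reverse-engineer the triple by matching coordinates in \eqref{eqn:digital-param}; the verifications are otherwise identical.
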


\begin{proof}
It is not difficult to verify that this family works, which we will do at the end of the proof. First, we will show how to construct it. Suppose $(x,y,B,k)$ is a digital anomaly. Using the parametrization \eqref{eqn:digital-param} from Theorem \ref{thm:anomaly-param},
\begin{align*}
\gcd(x,y) &= \gcd(tm(m-n)^2,\sqrt{tn}(m-n))\\
&= (m-n)\cdot\gcd(tm(m-n),\sqrt{tn}).
\end{align*}
To get $\gcd(x,y)$ to get closer to becoming $d$, we take $t=1$ and $m-n=d$ because it forces
$$(m-n)\cdot\gcd(tm(m-n),\sqrt{tn}) = d\cdot \gcd(md,\sqrt{n})=d\cdot \gcd(d,\sqrt{n}),$$
due to $m$ and $n$ being coprime.
Now it suffices to let $\sqrt{n}=s$ be an integer that is coprime to $d$, so that
$$\gcd(x,y) = d\cdot \gcd(d,\sqrt{n}) = d\cdot\gcd(d,s)=d.$$
So, using $t=1$, $m=n+d$, and $n=s^2$, \eqref{eqn:digital-param} becomes
\begin{align*}
    (x,y,B^k) &= (tm(m-n)^2,\sqrt{tn}(m-n),m\sqrt{tn})\\
    &= ((s^2+d)d^2,sd,(s^2 +d)s),
\end{align*}
which is \eqref{eqn:gcd-above-1-family} by taking $k=1$.

We can verify that \eqref{eqn:main-anomaly} holds here,
$$\frac{x}{y} = \frac{(s^2 +d)d^2}{sd} = sd + \frac{d^2}{s} = \frac{(s^2 + d)d^2}{(s^2 + d)s} = y + \frac{x}{B^k}.$$

Since we chose $k=1$, this makes the inequality constraint in \eqref{eqn:lower-digital-ineq} into
$$t(m-n)^4 < n \iff d^4 < s^2 \iff d^2 < s \iff d^2 + 1\le s,$$
which is simply a lower bound on $s$ in terms of the fixed $d$. Lastly, we note that \eqref{eqn:upper-digital-ineq} is trivially true when $k=1$ is substituted into it.
\end{proof}

\begin{example}
The smallest case of the families described in Theorem~\ref{thm:gcd-above-1-family} substitutes $d=2$ and $s=2^2+1=5$ into \eqref{eqn:gcd-above-1-family}. This yields
$$(x,y,B,k) = ((s^2+d)d^2,sd,(s^2+d)s,1) = (108,10,135,1).$$
We can check, using base $10$ forms, that
$$\frac{108}{10} = 10 + \frac{4}{5}  = 10 + \frac{108}{135^1}.$$
\end{example}

\section{Finite Classes}

Our final area of exploration will be to look at the set of digital anomalies for a fixed base $B$ or a fixed exponent $k$.

For analyzing fixed bases, the following is a significant lemma that is taken from Alan Baker's famous work in transcendental number theory.

\begin{lemma}\label{lem:Baker-Wustholz}
Let $\alpha_1,\ldots,\alpha_u$ be algebraic numbers, not $0$ or $1$, for some positive integer $u$. Let $\log \alpha_1,\ldots,\log \alpha_u$ denote the principal value of each logarithm. Let $K$ be the field generated by $\alpha_1,\ldots,\alpha_u$ over the rationals $\mathbb{Q}$ and let $d$ be the degree of $K$. For each $\alpha_i$, let $A_i\ge e$ be an upper bound on the absolute values of the relatively prime integer coefficients of the minimal polynomial of $\alpha_i$, where $e$ is Euler's constant. Let $c_1,\ldots,c_u$ be integers, not all $0$, and let $C\ge e$ be an upper bound on $\{|c_1|,\ldots,|c_u|\}$. Denoting
$$\Lambda = c_1\log \alpha_1 + \cdots + c_u \log \alpha_u,$$
it holds that
\begin{equation}\label{eqn:Baker-general-lower}
    \log |\Lambda| > -16(ud)^{2(u+2)} (\log A_1 \cdots \log A_u)\log C,
\end{equation}
which consists of natural logarithms of positive real numbers. \cite[p.~20]{Baker}
\end{lemma}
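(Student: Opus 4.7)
The final statement is the Baker--Wüstholz theorem on linear forms in logarithms of algebraic numbers, one of the crown jewels of twentieth-century transcendental number theory. It is certainly not something I would attempt to derive from scratch inside this paper; the plan is simply to cite it verbatim from Baker's monograph, exactly as the statement already indicates with \cite[p.~20]{Baker}. In that sense the ``proof'' here consists of a pointer to the literature rather than any new mathematics.

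For the reader's context I would include a short paragraph sketching the classical strategy so that the depth of what is being invoked is transparent. The argument, originating with Baker in the 1960s, proceeds by contradiction. One assumes that $|\Lambda|$ is smaller than the claimed lower bound, then uses Siegel's lemma to construct an auxiliary exponential polynomial in several complex variables whose monomials are built from the $\alpha_i$ and whose integer coefficients have controlled size, arranged so that the function vanishes to high order at many lattice points. An extrapolation argument then alternates between using the assumed smallness of $|\Lambda|$ (together with derivative estimates) to propagate zeros of the auxiliary function to new lattice points, and applying Liouville-type lower bounds for the nonzero algebraic values it takes, eventually producing more zeros than the degree of the auxiliary polynomial can accommodate.

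The precise numerical shape of the right-hand side of \eqref{eqn:Baker-general-lower}, with its specific dependence on $u$, $d$, the heights $A_i$, and $C$, is the Wüstholz refinement and rests on the analytic subgroup theorem for commutative algebraic groups, which sharpens the original Baker bound substantially. The main obstacle to any self-contained write-up would be that this algebraic-geometric and Diophantine-approximation machinery is entirely orthogonal to the elementary Pythagorean-triple and $abc$-style arguments driving the rest of the paper; importing the lemma as a black box is both natural and standard. Consequently, the only real task on my side is to state the hypotheses accurately and, in each subsequent application, to verify explicitly that the $\alpha_i$ are nonzero algebraic numbers distinct from $1$, that the $c_i$ are integers not all zero, and that the prescribed bounds $A_i \ge e$ and $C \ge e$ on heights and coefficients are respected.
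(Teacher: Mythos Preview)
Your proposal is correct and matches the paper's approach: the lemma is stated with a citation to Baker and W\"ustholz and is not proved in the paper, so treating it as a black box with a pointer to the literature is exactly what is done. Your added paragraph sketching the auxiliary-function strategy and the W\"ustholz refinement is more than the paper provides, but it is accurate and harmless as context.
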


As a direct consequence, we obtain the following special case for our purposes.

\begin{lemma}\label{lem:special-Baker}
Let $p_1,\ldots,p_u$ be distinct primes. Let $c_1,\ldots,c_u$ be integers, not all $0$, and let $C\ge e$ be an upper bound on the set $\{|c_1|,\ldots,|c_u|\}$. Denoting
$$\Lambda = c_1 \log p_1 +\cdots + c_u \log p_u = \log (p_1^{c_1}\cdots p_u^{c_u}),$$
where all logarithms are real and in the natural base, it holds that
\begin{equation}\label{eqn:Baker-special-lower}
    \log |\Lambda| > -2(16u)^{2(u+2)}(\log p_1 \cdots \log p_u) \log C.
\end{equation}
For future reference, we will use $D$ to denote the positive constant $$D=2(16u)^{2(u+2)}(\log p_1 \cdots \log p_u),$$ which depends only on the $p_i$ and $u$ (the number of $p_i$).
\end{lemma}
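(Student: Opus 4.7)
The plan is to specialize Lemma~\ref{lem:Baker-Wustholz} by taking $\alpha_i = p_i$ for each $i$, and then to absorb the resulting bookkeeping into the larger constant appearing in \eqref{eqn:Baker-special-lower}.

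First I would check the hypotheses of Lemma~\ref{lem:Baker-Wustholz}. Each prime $p_i$ is rational, hence algebraic, and is neither $0$ nor $1$. Because each $p_i$ lies in $\mathbb{Q}$, the number field the $\alpha_i$ generate is $K=\mathbb{Q}$ itself, so its degree is $d=1$. The minimal polynomial of $p_i$ over $\mathbb{Q}$ is $x-p_i$, whose relatively prime integer coefficients have absolute values $1$ and $p_i$. The choice $A_i = p_i^2$ is therefore a valid upper bound on these absolute values, and since $p_i \ge 2$ we have $A_i \ge 4 > e$, so the constraint $A_i \ge e$ is satisfied. With this choice, $\log A_i = 2\log p_i$.

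Plugging these data into \eqref{eqn:Baker-general-lower} with $d=1$ would give
$$\log|\Lambda| > -16\,u^{2(u+2)}\cdot\prod_{i=1}^u (2\log p_i)\cdot\log C = -2^{u+4}\,u^{2(u+2)}\left(\prod_{i=1}^u \log p_i\right)\log C.$$
To finish, it is enough to verify that the leading constant is at most $2(16u)^{2(u+2)} = 2^{8u+17}\,u^{2(u+2)}$. Since $u+4 \le 8u+17$ for every positive integer $u$, we have $2^{u+4}\le 2^{8u+17}$, and \eqref{eqn:Baker-special-lower} falls out immediately. There is no genuine obstacle here; the lemma is a direct corollary of Lemma~\ref{lem:Baker-Wustholz}, and the only arithmetic is the trivial exponent comparison, which is what allows the generous constant $2(16u)^{2(u+2)}$ to swallow the factor of $2^u$ introduced by the choice $A_i = p_i^2$.
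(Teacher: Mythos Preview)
Your argument is correct and follows the same route as the paper: specialize Lemma~\ref{lem:Baker-Wustholz} with $\alpha_i=p_i$, $d=1$, and absorb the bookkeeping into the stated constant. The only difference is cosmetic: the paper squares just the smallest $p_i$ (to cover the case $p_i=2<e$), picking up a single factor of $2$, whereas you square every $A_i$ and pick up $2^u$; since the target constant $2(16u)^{2(u+2)}=2^{8u+17}u^{2(u+2)}$ is far larger than either $2^5 u^{2(u+2)}$ or your $2^{u+4}u^{2(u+2)}$, both choices work and your uniform treatment is arguably cleaner.
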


\begin{proof}
In Lemma~\ref{lem:Baker-Wustholz}, we take each $\alpha_i$ to be $p_i$, which has the minimal polynomial $x-p_i$, so $A_i = p_i$ is acceptable in all but the case of the prime $2$. If the prime $p_i=2$ appears, then $A_i = p_i = 2 < e$, whereas we are required to have $A_i \ge e$. To get a consistent bound, we always choose the $A_i$ corresponding to the smallest appearing prime $p_i$ to be $A_i = p_i^2$, which is always greater than both $p_i$ and $e$. Then,
$$\log A_i = \log (p_i^2) = 2 \log p_i,$$ which explains our extra factor of $2$ in $D$ in \eqref{eqn:Baker-special-lower}, compared to the expression for the lower bound in \eqref{eqn:Baker-general-lower} in Lemma~\ref{lem:Baker-Wustholz},

The degree of $K$ is just $d=1$, since the primes $p_i$ are all in $\mathbb{Q}$.

Lastly, we can let $C = \max\{e,|c_1|,\ldots,|c_u|\}$, where $e$ is included among the elements, just in case all of the integers $c_i$ are too small by all being in the interval $[-2,2]$, since we require $C\ge e$.
\end{proof}

A variation of this special case of Lemma~\ref{lem:Baker-Wustholz} is mentioned in \cite[p.~1229]{Granville}, but that article appears to have missed the necessity of our extra factor of $2$ in $D$.

\begin{theorem}
For each base $B\ge 2$ there exist finitely many digital anomalies $(x,y,B,k)$.
\end{theorem}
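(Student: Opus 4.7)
The plan is to combine the parametrization of Theorem~\ref{thm:anomaly-param} with the $S$-unit bound of Lemma~\ref{lem:special-Baker}. Squaring the identity $B^k = m\sqrt{tn}$ yields $B^{2k} = m^2 t n$, so with $B$ fixed, each of $m$, $n$, and $t$ is a product of the primes $p_1,\ldots,p_u$ dividing $B$. The coprimality condition $\gcd(m,n) = 1$ means that for each $p_i$ at most one of $m$ and $n$ is divisible by $p_i$. Writing $m = \prod p_i^{\alpha_i}$ and $n = \prod p_i^{\beta_i}$, the quantity $\log(m/n) = \sum_i (\alpha_i - \beta_i)\log p_i$ is a nontrivial $\mathbb{Z}$-linear combination of $\log p_1, \ldots, \log p_u$ (nontrivial because $m > n$).

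I would then establish two competing estimates on $\log(m/n)$. For the upper bound, the inequality $t(m-n)^4 < n$ from \eqref{eqn:lower-digital-ineq} forces $m - n < n^{1/4}$, so
\[
0 < \log\frac{m}{n} < \frac{m-n}{n} < n^{-3/4}.
\]
(The potential boundary case $n = 1$ does not arise, since $t(m-1)^4 < 1$ has no positive integer solution with $m > 1$, giving $n \ge 2$.) For the lower bound, Lemma~\ref{lem:special-Baker} applied to $\Lambda = \log(m/n)$ yields $|\Lambda| > C^{-D}$, where $D$ depends only on $B$ and $C$ may be taken to be $\max(e, \max_i |\alpha_i - \beta_i|)$. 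Since $m \le B^k$ and $n \le B^{2k}$, each exponent is at most $2k\log B/\log 2$, so $C$ can be chosen linear in $k$.

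Combining the two estimates gives $n^{-3/4} > C^{-D}$, so $n$ is bounded by a polynomial in $k$ (of degree $4D/3$). Since $t < n$ and $m < n + n^{1/4} = O(n)$, the same polynomial bound holds for $m$ and $t$. Substituting back into $B^{2k} = m^2 t n$ yields a polynomial bound $B^{2k} = O(k^{16D/3})$, which contradicts the exponential growth of $B^{2k}$ for $k$ beyond an explicit threshold depending only on $B$. For each of the remaining finitely many values of $k$, the equation $B^{2k} = m^2 t n$ has only finitely many positive integer solutions $(t,m,n)$, so Theorem~\ref{thm:anomaly-param} produces only finitely many digital anomalies. The main obstacle is verifying that the height parameter $C$ in Baker's theorem truly is linear in $k$ (rather than in the far larger $B^k$), which hinges on the coprimality of $m$ and $n$ preventing cancellation among the exponents $\alpha_i - \beta_i$.
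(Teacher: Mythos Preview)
Your argument is correct and uses the same core ingredients as the paper---the parametrization of Theorem~\ref{thm:anomaly-param}, the linear form $\Lambda = \log(m/n)$, the elementary upper bound $|\Lambda| < n^{-3/4}$ coming from \eqref{eqn:lower-digital-ineq}, and the Baker--W\"ustholz lower bound of Lemma~\ref{lem:special-Baker}. The difference lies in how you close the loop. The paper observes that the height parameter $C$ (the largest prime exponent appearing in $m$ or $n$) satisfies $n \ge p^{C}$ for the smallest prime $p\mid B$; feeding this back into the combined inequality $\log n < \tfrac{4}{3}D_B\log C$ produces $C/\log C <$ constant, whence $C$, and therefore $n$, $m$, and $t$, are bounded by constants depending only on $B$---with no reference to $k$ at all. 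Your route instead bounds $C$ linearly in $k$ via $m,n\mid B^{2k}$, deduces that $m^2tn$ is polynomial in $k$, and then pits this against the exponential $B^{2k}$ to cap $k$. Both arguments are valid; the paper's bootstrap is more self-contained and yields bounds purely in terms of $B$, while your polynomial-versus-exponential comparison is arguably more transparent and avoids the paper's somewhat delicate case split on which exponent realises $C$.

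Your closing worry is unnecessary: the linearity of $C$ in $k$ does not rely on coprimality of $m$ and $n$. Each exponent $\alpha_i$ satisfies $p_i^{\alpha_i}\le m\le B^k$ and each $\beta_i$ satisfies $p_i^{\beta_i}\le n\le B^{2k}$, so $|\alpha_i-\beta_i|\le \alpha_i+\beta_i\le 3k\log B/\log 2$ regardless of any cancellation. Coprimality is used only to ensure that $m$ and $n$ are $S$-units for $S=\{p_1,\ldots,p_u\}$ in the first place (via $mn\mid B^{2k}$), and that step you already handled correctly.
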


\begin{proof}
Suppose $B\ge 2$ is a fixed integer, and that $(x,y,B,k)$ is a digital anomaly. By Theorem~\ref{thm:anomaly-param}, there exist positive integers $(t,m,n)$ in terms of which $x,y$ can be written and $$B^k = m\sqrt{tn} \implies B^{2k} = m^2 nt.$$
So, the prime factors of $m$ and $n$ are all from among the prime factors of $B$. Moreover, it is known that $m,n$ are coprime, so the set of prime factors of $m$ are disjoint from the set of prime factors of $n$. Let the prime factorizations of $B,m,$ and $n$ be $r_1^{c_1}\cdots r_u^{c_u}$, $p_1^{a_1}\cdots p_v^{a_v}$, and  $q_1^{b_1}\cdots q_w^{b^w}$, respectively, where the primes in each individual prime factorization are distinct, and the multiplicities are all non-zero.

The general hope is to put a bound above each of $t,m,n$ in terms of a constant depending only on $B$, regardless of $k$, so that there are only finitely many possibilities remaining. From \eqref{eqn:lower-digital-ineq} and $m>n$, we know that
\begin{align}
    t \le t(m-n)^4 < n &\implies t < n,\\
    (m-n)^4 \le t(m-n)^4 < n &\implies m < n^{\frac{1}{4}} + n \le 2n. \label{eqn:m-n-ineq}
\end{align}
Combining these yields the chain $$t < n < m < 2n.$$
This means  that it suffices to bound $m$ or $n$ in terms of a constant depending only on $B$ because, then, the same will follow for the other two parameters from the triple $(t,m,n)$. This strategy is achievable using Lemma~\ref{lem:special-Baker}, as it will be shown.

To get an expression $\Lambda$ that is a linear combination of logarithms, with integer coefficients, we consider
\begin{equation}\label{eqn:Lambda}
    \log \left(\frac{m}{n}\right) = \log m - \log n = \sum_{i=1}^{v}{a_i\log p_i} + \sum_{j=1}^{w}{(-b_j)\log q_i},
\end{equation}
which is desirable because it has the multiplicities of the primes factor of $m$ and $n$ as the coefficients of the logarithms, and it suffices to bound them. Since $m>n$, the expression in \eqref{eqn:Lambda}, which we will call $\Lambda$, is positive. Using the well-known fact that $\log (x+1) < x$ for all $x>0$, we get
$$|\Lambda| = \Lambda = \log\left(\frac{m}{n}\right) = \log\left(1+\frac{m-n}{n}\right) < \frac{m-n}{n}.$$
Then, \eqref{eqn:m-n-ineq} tells us that
$$m-n < n^{\frac{1}{4}} \implies \frac{m-n}{n} < n^{-\frac{3}{4}}.$$
Therefore, we have the elementary upper bound
$$\log |\Lambda| < \log \frac{m-n}{n} < \log(n^{-\frac{3}{4}}) = -\frac{3}{4}\log n.$$

On the other hand, Lemma~\ref{lem:special-Baker} gives the lower bound $$\log |\Lambda| > - D \log C,$$
where
\begin{align*}
    D &= 2(16(v+w))^{2(v+w+2)} (\log p_1 \cdots \log p_v \cdot \log q_1 \cdots \log q_w),\\
    C &= \max \{e, a_1,\ldots,a_v,b_1\ldots,b_w\}.
\end{align*}
In order to make $D$ depend purely on $B$, we replace it with the larger number,
$$D_B = 2 (16u)^{2(u+2)} (\log r_1\cdots\log r_u),$$
which is acceptable since $u\ge v+w$, and $\{p_1,\ldots,p_v\}$ and $\{q_1,\ldots,q_w\}$ are disjoint subsets of the set of prime factors $\{r_1,\ldots,r_u\}$ of $B$. Then,
$$\log |\Lambda| > -D\log C > -D_B \log C.$$
Combining the upper and lower bound for $\log |\Lambda|$ creates our fundamental squeeze:
\begin{equation}\label{eqn:fundamental-squeeze}
    -D_B \log C < \log |\Lambda| < -\frac{3}{4}\log n \implies \log n < \frac{4}{3} D_B \log C.
\end{equation}
While this is an upper bound on $n$, $C$ still depends on the $a_i$ and the $b_j$, which are not yet bounded. There are three (possibly overlapping) cases of $C$: it is $e$ or it is $\max\{b_1,\ldots, b_w\}$ or it is $\max\{a_1,\ldots,a_v\}$. These are handled as follows:
\begin{enumerate}
    \item If $C=e$, then \eqref{eqn:fundamental-squeeze} implies
    $$\log n < \frac{4}{3}  D_B \log e = \frac{4}{3} D_B \implies n = e^{\log n} < e^{\frac{4}{3}D_B},$$
    which is an upper bound on $n$, in terms of a constant that depends only on $B$.
    \item Suppose $C=\max\{b_1,\ldots,b_w\}=b_j$ for some index $j$. Letting $p$ be the smallest prime factor of $B$, we get $$n \ge q_j^{b_j}\ge p^{b_j} = p^C,$$
    which leads to $$C\log p = \log(p^C)\le \log n < \frac{4}{3}D_B \log C \implies \frac{C}{\log C} < \frac{4D_B}{3\log p}.$$
    This proves that $\frac{C}{\log C}$ is bounded above in terms of a constant, assuming $B$ is fixed. Recall that, over real $z$,
    $$\frac{d}{dz} \frac{z}{\log z} = \frac{\log z -1}{(\log z)^2}$$
    is positive for all real $z>e$, so $\frac{z}{\log z}$ is strictly increasing there. We know that $C$ is an integer above $e$ in this case, which implies the existence of an upper bound $W_B$ for $C$, using only $B$. Finally, \eqref{eqn:fundamental-squeeze} produces
    $$n = e^{\log n} < e^{\frac{4}{3}D_B \log C} = C^{\frac{4}{3} D_B}\le W_B^{\frac{4}{3} D_B},$$
    which is a constant upper bound for $n$, as long as $B$ is known.
    \item Suppose $C=\max\{a_1,\ldots,a_v\}=a_i$ for some index $i$. In this case, we use \eqref{eqn:m-n-ineq} to get
    \begin{align}
        m < 2n &\implies \log m < \log 2 + \log n < \log 2 + \frac{4}{3}D_B\log C\\
        &\implies \log \frac{m}{2} = \log m -\log 2 < \frac{4}{3}D_B\log C.\label{eqn:m-upper-bound}
    \end{align}
    Again, we let $p$ be the smallest prime factor of $B$. Similar to the second case, we get $m\ge p^C$, so
    $$C\log \frac{p}{2} \le \log \frac{p^C}{2^C} \le \log \frac{p^C}{2} \le \log \frac{m}{2} < \frac{4}{3} D_B\log C,$$
    giving
    $$\frac{C}{\log C} < \frac{4D_B}{3\log \frac{p}{2}},$$
    where the upper bound is a constant when $B$ is fixed. As before, the strictly increasing property of $\frac{z}{\ln z}$ for real $z>e$ yields an implicit upper bound $V_B$ for $C$ that depends only on $B$, so \eqref{eqn:m-upper-bound} produces
    $$m = 2\cdot \frac{m}{2} = 2e^{\log \frac{m}{2}}< 2e^{\frac{4}{3}D_B\log C} = 2C^{\frac{4}{3}D_B} \le 2V_B^{D_B},$$
    which is a constant upper bound for $m$ if $B$ is known.
\end{enumerate}
The bounds in each case are large, so calculations on the ``finitely many'' cases would still be computationally intensive.
\end{proof}

Lastly, we apply the $abc$ conjecture to the case of fixed exponents $k$ to derive a conditional result.

\begin{conjecture}[$abc$ conjecture]
Let $\epsilon >0$ be a fixed real number. Then, the set of triples $(a,b,c)$ of coprime positive integers such that $a+b=c$ with $a<b$ and $$\text{Rad}(abc) < c^{\frac{1}{1+\epsilon}}$$
is finite. Here, $\text{Rad}(\gamma)$ denotes the radical of the positive integer argument $\gamma$, which is the product of the distinct prime factors of $\gamma$. \cite[p.~213]{Waldschmidt}.
\end{conjecture}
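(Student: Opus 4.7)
The statement to be ``proved'' is the $abc$ conjecture itself, one of the most prominent open problems in modern Diophantine number theory. The paper introduces it inside the \emph{conjecture} environment, with an external citation to Waldschmidt, precisely because it is not being proved here but invoked as a hypothesis for the forthcoming conditional finiteness result on digital anomalies whose numerator has $k\ge 3$ digits. No proof is available within the elementary toolkit developed earlier in the paper, and none is intended; my ``plan'' is, candidly, to not prove it.

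For completeness, I would sketch in the surrounding prose what the landscape of existing approaches looks like, so the reader understands why the statement is left as a hypothesis. The only widely publicized claim of a proof is due to Mochizuki via inter-universal Teichm\"uller theory, whose status remains disputed and which relies on anabelian-geometric machinery wholly disjoint from anything used in this paper. On the unconditional side, the strongest effective results are those of Stewart and Yu, who apply bounds on linear forms in logarithms --- a close cousin of Lemma~\ref{lem:Baker-Wustholz} used in the previous section --- to obtain inequalities roughly of the shape
\[
\log c \le \kappa\, \mathrm{Rad}(abc)^{1/3}\bigl(\log \mathrm{Rad}(abc)\bigr)^{3},
\]
which fall vastly short of the polynomial bound $c < \mathrm{Rad}(abc)^{1+\epsilon}$ demanded by the conjecture. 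No refinement of these arguments is currently known to close the gap.

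The main obstacle, stated plainly, is that proving the full $abc$ conjecture lies outside the scope of this paper and, by mainstream consensus, of present-day elementary and analytic number theory. The concrete plan for the paper is therefore to leave the statement in conjectural form, cite Waldschmidt for provenance, and proceed to \emph{apply} it in the next result: I will assume the conjecture and, for each fixed $k\ge 3$, extract from the rearrangement $B^k x = B^k y^2 + xy$ of \eqref{eqn:main-anomaly} a coprime triple $(a,b,c)$ whose radical can be controlled in terms of $x,y,B$, so that the conjectured bound forces $x$ into a finite set. The purpose of the present \textbf{conjecture} statement is solely to fix notation and the value of $\epsilon$ that that argument will invoke.
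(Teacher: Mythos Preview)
Your assessment is correct: the statement is the $abc$ conjecture, placed in a \texttt{conjecture} environment with a citation, and the paper offers no proof of it whatsoever --- it is invoked purely as a hypothesis for the next theorem. Your decision not to attempt a proof matches the paper exactly.

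One small correction to your sketch of the forthcoming application: the paper does not extract an $abc$-triple directly from the rearrangement $B^k x = B^k y^2 + xy$. Instead, it works through the parametrization of Theorem~\ref{thm:anomaly-param} and takes $(a,b,c) = (m-n,\,n,\,m)$, controlling the radical via the observation that $mn \mid B^{2k}$ so that $\mathrm{Rad}((m-n)nm) \le B(m-n)$, and then bounding $B$ and $m-n$ by powers of $m$ using Corollary~\ref{crl:k-in-mnB} and \eqref{eqn:m-n-ineq}. This yields $\mathrm{Rad}((m-n)nm) < m^{11/12}$ for $k\ge 3$, so $\epsilon = \tfrac{1}{11}$ suffices. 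Your plan to work from the original equation in $x,y$ may also succeed, but it is not what the paper does.
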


\begin{theorem}
Assuming the $abc$ conjecture, the collection of digital anomalies $(x,y,B,k)$ is finite for each fixed $k\ge 3$.
\end{theorem}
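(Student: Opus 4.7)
The plan is to invoke the $abc$ conjecture on the coprime triple $(m-n) + n = m$ extracted from Theorem~\ref{thm:anomaly-param}, and to exploit the fact that the constraint $(m-n)^4 < n$ from \eqref{eqn:lower-digital-ineq} sharpens just enough to force a contradiction once $k \ge 3$. Fix $k \ge 3$ and let $(x,y,B,k)$ be any digital anomaly with associated parameters $(t,m,n)$. Since $\gcd(m,n) = 1$, one quickly checks that $\gcd(m-n,n) = \gcd(m-n,m) = 1$, and \eqref{eqn:lower-digital-ineq} forces $m - n < n^{1/4} < n$, so $(m-n,\,n,\,m)$ is a coprime triple fulfilling the $a + b = c$, $a < b$ hypothesis of the $abc$ conjecture.

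The heart of the argument will be to bound the radical and then compare exponents of $m$. Since $B^{2k} = m^2 t n$ by Theorem~\ref{thm:anomaly-param}, every prime dividing $mn$ divides $B$, so $\text{Rad}(mn) \le B$, and by coprimality of $m-n$ with $mn$,
\[
\text{Rad}((m-n)\,m\,n) \;\le\; (m-n)\cdot \text{Rad}(mn) \;\le\; (m-n)\cdot B.
\]
From $(m-n)^4 < n < m$ one gets $m - n < m^{1/4}$, and from $B^k = m\sqrt{tn} < m\cdot n < m^2$ (since $t < n$ implies $\sqrt{tn} < n$) one gets $B < m^{2/k}$. Combining,
\[
\text{Rad}((m-n)\,m\,n) \;<\; m^{1/4 + 2/k} \;\le\; m^{11/12},
\]
where the last step invokes $k \ge 3$. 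Now choosing any fixed $\epsilon \in (0,\,1/11)$ makes $\tfrac{1}{1+\epsilon} > \tfrac{11}{12}$, so the $abc$ conjecture — which forces all but finitely many such coprime triples to satisfy $\text{Rad}(abc) \ge c^{1/(1+\epsilon)}$ — becomes incompatible with the bound above once $m$ is sufficiently large. Consequently only finitely many pairs $(m,n)$ arise, and since $t < n < m$ and $B = (m^2 t n)^{1/(2k)}$ with $k$ fixed, this cascades to finitely many triples $(t,m,n)$ and therefore to finitely many digital anomalies $(x,y,B,k)$.

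The genuine obstacle is not identifying the triple but rather arranging the exponent inequality $1/4 + 2/k < 1$, which is precisely where the hypothesis $k \ge 3$ enters: for $k = 2$ the analogous upper bound becomes $m^{5/4} > m$, whereas the $abc$ conjecture can only ever deliver $\text{Rad}(abc) \ge c^{1/(1+\epsilon)} < c = m$, and no contradiction can be extracted. The delicate step in the plan, then, is verifying that the strong constraint $(m-n)^4 < n$ supplied by Theorem~\ref{thm:anomaly-param}, combined with $B^k < m^2$, suffices to push the radical strictly below $m$ exactly once $k \ge 3$.
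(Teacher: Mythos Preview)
Your proof is correct and follows essentially the same route as the paper: apply the $abc$ conjecture to the coprime triple $(m-n,n,m)$, bound $\text{Rad}((m-n)mn) \le (m-n)\,B$, control $m-n < m^{1/4}$ via \eqref{eqn:lower-digital-ineq} and $B < m^{2/k}$, and conclude $\text{Rad} < m^{11/12}$ for $k\ge 3$. The only cosmetic differences are that you derive $B^k < m^2$ directly from $t<n \Rightarrow \sqrt{tn}<n$ whereas the paper routes through Corollary~\ref{crl:k-in-mnB}, and you pick $\epsilon \in (0,1/11)$ strictly rather than $\epsilon = 1/11$; both choices work equally well.
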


\begin{proof}
Suppose $(x,y,B,k)$ is a digital anomaly for some $k\ge 3$. By Theorem~\ref{thm:anomaly-param}, we know that positive integers $t,m,n$ exist such that
$$B^k = m\sqrt{tn} \implies B^{2k} = m^2 tn \implies mn\mid B^k,$$
so the set of prime factors of $mn$ is a subset of that of $B$. Moreover, recall from \eqref{eqn:m-n-ineq} that
$$m < 2n \implies m-n <n.$$
This allows us to set $(a,b,c) = (m-n,n,m)$ in the $abc$ conjecture. Then,
\begin{equation}\label{eqn:abc-Rad-initial}
\begin{aligned}
    \text{Rad}((m-n)nm)&\le \text{Rad}(B^k (m-n))\\
    &= \text{Rad}(B(m-n))\le B(m-n),
\end{aligned}
\end{equation}
since powers in the argument of the radical function are irrelevant. Now, we obtain the following estimates for $B$ using \eqref{eqn:k-using-mnB} and $m-n$ using \eqref{eqn:m-n-ineq}, along with $n<m$,
\begin{align*}
    B^k \le (m-n)^2 B^k < mn < m^2 \implies & B < m^{\frac{2}{k}},\\
    &m-n < n^{\frac{1}{4}} < m^{\frac{1}{4}}.
\end{align*}
Together, for $k\ge 3$, they yield
$$B(m-n) < m^{\frac{2}{k}} m^{\frac{1}{4}} \le m^{\frac{2}{3}+\frac{1}{4}} = m^{\frac{11}{12}},$$
Since $\frac{11}{12} = \frac{1}{\left(\frac{12}{11}\right)} = \frac{1}{\left(1+\frac{1}{11}\right)}$, we can take $\epsilon = \frac{1}{11}$ to get
$$\text{Rad}((m-n)nm) < m^{\frac{1}{1+\epsilon}}.$$
By the $abc$ conjecture, there are finitely many triples for $\epsilon = \frac{1}{11}$. This means the set of pairs $(m,n)$ produced from the set of digital anomalies $(x,y,B,k)$ is finite for each fixed $k\ge 3$, which bounds $m$ and $n$ above by a constant using only $k$. Since $0<t<n$, we know that $t$ is bounded above as well, so the set of triples $(t,m,n)$ is finite. By Theorem~\ref{thm:anomaly-param}, $x$ and $y$ can be written in terms of $(t,m,n)$ and $B = (m\sqrt{tn})^{\frac{1}{k}}$ is also determined when $k$ is fixed.

Therefore, contingent on the truth of the $abc$ conjecture, fixing $k\ge 3$ yields a finite set of digital anomalies.
\end{proof}

Unfortunately, applying the $abc$ conjecture to the $k=2$ case does not appear to be feasible, as it is difficult to find a suitable constant $\epsilon > 0$. We can come close with an argument based on \eqref{eqn:k-using-mnB} and \eqref{eqn:abc-Rad-initial}:
\begin{align*}
    B^k < \frac{mn}{(m-n)^2} &\implies  B^2 (m-n)^2 < mn < m^2\\
    &\implies \text{Rad}((m-n)nm) \le B(m-n) < m.
\end{align*}
This is equivalent to taking $\epsilon =0$ in the exponent $\frac{1}{1+\epsilon}$ of $m$ in the upper bound, whereas $\epsilon >0$ remains elusive. To that end, we offer a conjecture for $k=2$.

\begin{conjecture}
The only digital anomalies $(x,y,B,k)$ for which $k=2$ are $(18,4,6,2)$ and $(1323,36,42,2)$.
\end{conjecture}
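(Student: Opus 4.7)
The plan is to apply Theorem~\ref{thm:anomaly-param} with $k=2$, reducing the problem to classifying triples of positive integers $(t,m,n)$ with $m>n$, $\gcd(m,n)=1$, $t(m-n)^4<n$, and $m^2 t n = B^4$ for some integer $B\ge 2$. The two known solutions correspond to $(t,m,n)=(2,9,8)$ and $(27,49,48)$, both with $d:=m-n=1$. I would split the analysis into the cases $d\ge 2$ and $d=1$ and attempt to eliminate all other triples.

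For $d\ge 2$, the inequality $td^4<n$ forces $t\le n/16$, and $mn\mid B^4$ strongly restricts the prime factorizations of $m$ and $n$. For each fixed $d$ I would first attempt to sharpen the author's radical bound, then fall back on a Baker linear form in $\log m-\log n$ (as in the fixed-$B$ argument, since every prime dividing $mn$ also divides $B$) to extract an effective upper bound $m<C_d$, followed by a finite enumeration.

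For $d=1$, the coprimality of $m=n+1$ and $n$ determines the admissible $t$ prime by prime. Writing out the constraints ``$m^2 tn$ is a fourth power'' and ``$tn$ is a square'' gives minimal admissible value
$$t_{\min} = \prod_{\substack{p \mid n+1 \\ \nu_p(n+1)\text{ odd}}} p^{2} \cdot \prod_{p \mid n} p^{(-\nu_p(n))\bmod 4},$$
and every admissible $t$ equals $t_{\min}$ multiplied by a fourth power. The conjecture would then reduce to showing $t_{\min}\ge n$ for all $n\notin\{8,48\}$. Each prime $p\mid n$ with $\nu_p(n)=1$ contributes a full factor of $p^2$ to $t_{\min}/n$, while primes of high multiplicity in $n$ contribute negatively, so a subcase analysis on the residues of $\nu_p(n)$ modulo $4$ would aim to force $(m,n)$ into the two listed configurations.

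The decisive obstacle is the $\epsilon=0$ degeneracy noted by the author: the bound $\text{Rad}(mn(m-n))\le B(m-n)\le \sqrt{2}\,m$ leaves no slack for the $abc$ conjecture. More seriously, the $d=1$ bookkeeping above already admits additional solutions --- for instance, $n=288=2^{5}\cdot 3^{2}$ with $m=289=17^{2}$ gives $t_{\min}=72<n$, producing the digital anomaly $(x,y,B,k)=(20808,144,204,2)$, and an infinite Pell-type family (namely $a\in\{17,577,19601,\ldots\}$ from $B^{2}-2A^{2}=1$, where $n=a^{2}-1$ has both $a-1$ and $a+1$ equal to a power of $2$ times an odd square) produces arbitrarily many more. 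So before any proof strategy can succeed, the conjecture itself appears to require revision, perhaps by enumerating this Pell family explicitly or by restricting to $\gcd(x,y)=1$.
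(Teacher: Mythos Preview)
The paper does not prove this statement: it is offered as an open conjecture precisely because the author's $abc$ argument degenerates at $k=2$ (the $\epsilon=0$ obstruction you also identify). So there is no proof in the paper to compare your attempt against.

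More to the point, your proposal does not prove the conjecture either, and for the right reason: the conjecture is false, and your counterexample is genuine. With $(x,y,B,k)=(20808,144,204,2)$ one checks directly that
\[
\frac{20808}{144}=144+\tfrac{1}{2}=144+\frac{20808}{204^{2}},\qquad 204\le 20808<204^{2}=41616,
\]
so this is a digital anomaly in the sense of Definition~\ref{def:digital-anomaly}. In the parametrization of Theorem~\ref{thm:anomaly-param} it arises from $(t,m,n)=(72,289,288)$: here $m-n=1$, $\gcd(289,288)=1$, $t(m-n)^{4}=72<288=n$, $tn=72\cdot 288=144^{2}$, and $m^{2}tn=17^{4}\cdot 12^{4}=204^{4}$, so $B^{2}=m\sqrt{tn}=289\cdot 144=204^{2}$.

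Your infinite family is also essentially correct, and in fact broader than the list $\{17,577,19601,\ldots\}$ you give: every solution $a\ge 3$ of $a^{2}-2b^{2}=1$ yields such an example with $m=a^{2}$, $n=a^{2}-1=2b^{2}$. The case $a=3$ recovers the paper's $(18,4,6,2)$, while $a=99$ (which you omit) gives the further anomaly $(24012450,4900,6930,2)$, and $a=17$ gives yours. Thus the conjecture as stated must be withdrawn; your suggestion that it be replaced by an explicit description of this Pell-type family, or restricted to coprime $(x,y)$, is the appropriate next step.
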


In fact, the author is unaware of any other examples for which $k>1$, so these might be the only solutions outside $k=1$. This possibility leaves the door open for stronger results than those derived here.

\newpage


\begin{thebibliography}{HD82}

\bibitem{Baker} A. Baker and G. W\"ustholz, \emph{Logarithmic forms and group varieties}, J.~reine angew. Math. 442 (1993), 19--62.

\bibitem{Knuth} R. L. Graham, D. E. Knuth, and O. Patashnik, \emph{Concrete Mathematics: A Foundation for Computer Science}, 2nd ed,, Addison-Wesley Publishing Company, Boston, 1994.

\bibitem{Granville} A. Granville and T. J. Tucker, \emph{It’s As Easy As abc}, Notices of the AMS 49:10 (2002), 1224--1234.

\bibitem{Hardy} G. H. Hardy and E. M. Wright,
\emph{An Introduction to the Theory of Numbers},
6th ed., Oxford University Press, New York, 2008.

\bibitem{Stack1} Robert Werner, \emph{Knowing that m and n are two positive integers, solve the equation m/n = n.m (ex.: 5/2 = 2.5)}, Mathematics Stack Exchange.

\url{https://math.stackexchange.com/q/1523261} (version: 2015-11-11)

\bibitem{Stack2} user266764, \emph{Number theory: find $a, b$ such that $\frac{a}{b} = b.a$ in a general base $\mathcal{B}\neq 10$}, Mathematics Stack Exchange.

\url{https://math.stackexchange.com/q/2617311} (version: 2018-01-23)

\bibitem{Stack3} Perry Ainsworth, \emph{Are there any other fraction-to-decimal conversions like $\frac{5}{2} = 2.5$?}, Mathematics Stack Exchange.

\url{https://math.stackexchange.com/q/4596205} (version: 2022-12-11)

\bibitem{Stack4} lsr314, \emph{Solve $\frac{b}{a}=a{.}b$ in decimal}, Mathematics Stack Exchange.

\url{https://math.stackexchange.com/q/623148} (version: 2023-11-04)

\bibitem{Waldschmidt}  M. Waldschmidt, \emph{Lecture on the abc Conjecture and Some of Its Consequences}, in: Mathematics in the 21st Century: 6th World Conference (Lahore, March 2013), P.~Cartier et al., Springer Proceedings in Mathematics \& Statistics 98, Springer, Basel, 2015, 211--130.

\end{thebibliography}
\end{document}